\newtheorem{thm}{Theorem}
\begin{document}

\doublespacing

\begin{frontmatter}



\title{An Optimal Query Assignment Policy for Wireless  Sensor Networks}


\author[label1]{Mihaela Mitici}
 \ead{M.A.Mitici@utwente.nl}
\author[label2]{Martijn Onderwater}
\ead{ M.Onderwater@cwi.nl}
\author[label1,label3]{Maurits de Graaf}
 \ead{M.deGraaf@utwente.nl}
\author[label1]{Jan-Kees van Ommeren}
 \ead{J.C.W.vanOmmeren@utwente.nl}
\author[label1]{Nico van Dijk}
 \ead{N.M.vanDijk@utwente.nl}
\author[label1]{Jasper Goseling}
 \ead{J.Goseling@utwente.nl}
\author[label1]{Richard J. Boucherie}
 \ead{R.J.Boucherie@utwente.nl}

\address[label1]{Department of Applied Mathematics, University of Twente,  P.O.Box 217, 7500 AE Enschede, The Netherlands}
\address[label2]{Centrum voor Wiskunde en Informatica, P.O.Box 94079, NL-1090 GB Amsterdam, The Netherlands }
\address[label3]{Thales B.V., P.O.Box 88 1270, Huizen, The Netherlands}

\begin{abstract}
A trade-off between two QoS requirements of wireless sensor networks: query waiting time and validity (age) of the data feeding the queries, is investigated. 
We propose a Continuous Time Markov Decision Process with a drift that trades-off between the two QoS requirements by assigning incoming queries to the wireless sensor network or to the database.
To compute an optimal assignment policy, we argue, by means of non-standard uniformization, a discrete time Markov decision process, stochastically equivalent to the initial continuous process.
We determine an optimal query assignment policy for the discrete time process by means of dynamic programming.
Next, we assess numerically the performance of the optimal policy and show that it outperforms in terms of average assignment costs three other heuristics, commonly used in practice.
Lastly, the optimality of the our model is confirmed also in the case of real query traffic, where our proposed policy  achieves significant cost savings compared to the heuristics.
\end{abstract}

\begin{keyword}
 Wireless Sensor Networks\sep Markov Decision Processes, Quality of Service


\end{keyword}

\end{frontmatter}


\section{Introduction}
\label{Introduction}

Wireless sensor networks (WSNs) are commonly used to sense environmental phenomena such as forest fire detection, intruder detection and indoor environmental control \cite{akyildiz2002wireless}. 
The sensed data is stored in databases, from which queries can be processed at a later stage.

The increased computing capabilities of modern sensor networks have enabled the WSNs to become an integrated platform on which local query processing is performed. Consequently, not only the Database (DB) is able to store and process queries, but also the sensors within the WSN.
Letting the WSN to solve queries, however, poses Quality of Service (QoS) challenges. For example, sensors can answer queries with the most recently sensed data. But always directing the queries to the WSN can overload the network and lead to high query waiting times. A trade-off arises between solving the queries within the WSN with the most recently acquired data and the time queries wait to be processed.

In recent years, studies on sensor networks have focused mainly on energy efficient data transmission~\cite{chen2004qos,lei2009generic,munir2009mdp} and the traffic was assumed to have unconstrained delivery requirements. However, growing interest in applications with specific QoS requirements has created additional challenges. We refer to ~\cite{chen2004qos,krishnamachari2005networking} for an extensive outline of WSN specific QoS requirements. 
The literature reveals related work on QoS-based routing protocols within the sensor network. Most such protocols satisfy end-to-end packet delay ~\cite{he2003speed} or data reliability requirements ~\cite{deb2003reinform,stann2003rmst} or a trade-off between the two ~\cite{felemban2005probabilistic}.
However, little work exists on QoS guarantees in the field of  sensor query monitoring, as addressed in this paper. In ~\cite{yao2003query} a query optimizer is used to satisfy query delay requirements. In ~\cite{khoury2010corona} the authors use data validity restrictions to specify how much time is allowed to pass since the last sensor acquisition so that the sensors are not activated, but previously sensed data is used.  

This paper addresses the trade-off between two QoS requirements commonly encountered in WSNs: waiting time for queries processed by the WSN and validity (age) of the data when queries are processed by a database (DB). 
We consider a system consisting of a DB and a WSN, both able to solve queries (see Figure 1). 
\begin{figure}\label{System}
\centering
\includegraphics[height=2.9cm]{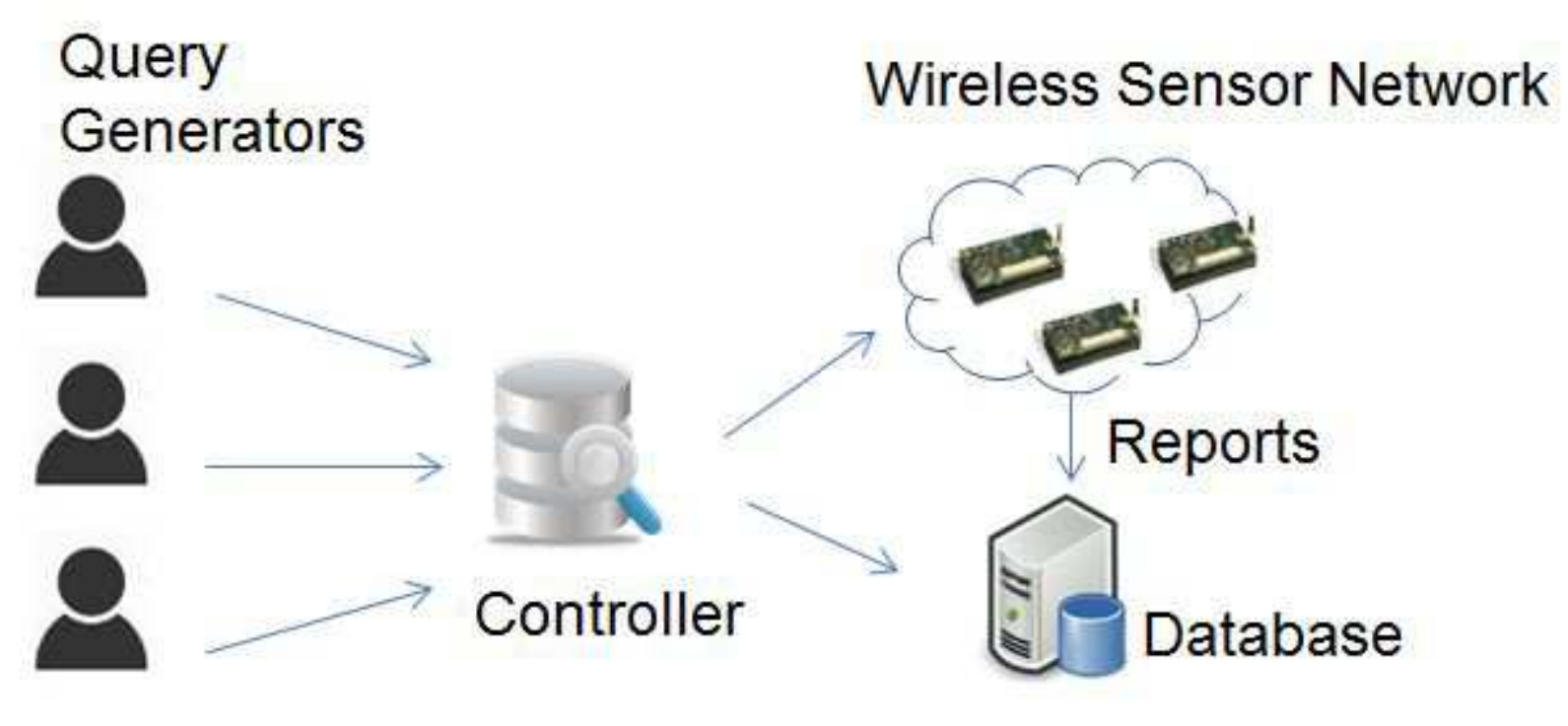}
\caption{WSN seen as an integrated platform, where queries generated by the end-users can be solved either by the WSN or by the DB. }
 \end{figure}

We assume that the DB solves queries instantaneously, since the time required to fetch data from the DB is negligible compared to the time a query is processed by the WSN.
The processor sharing type of service assumed for the WSN reflects the IEE 802.15.4 MAC design principle of distributing the processing capacity fairly among the jobs simultaneously present in the network. 
Processor sharing for WLAN was assumed in \cite{khoury2010corona} and validated  by simulation in \cite{litjens2003performance}.

Queries arrive at the system according to a Poisson process and are assigned  for processing either to the WSN or to the DB.
A WSN assignment increases the load of the network and results in large query waiting times. If queries are sent to the DB, the data provided to the queries may be outdated, as the age of the stored data increases in time. The fact that  the quality of the stored data deteriorates in time is an  essential feature of our system and will impose technical complications, as to be seen in the next section. 
We are interested in finding an optimal query assignment strategy such that the query waiting time and the age of the data provided to the queries are minimized. 

The query assignment problem presented above is formulated as a Continuous Time Markov Decision Process (CTMDP) with a drift. 
The continuous character of the process, and in particular,  the fact that the age component of the process evolves continuously in time, makes the problem non-standard and computationally intractable, i.e. the standard way of deriving an optimal policy recursively using dynamic programming is not applicable.
Therefore, for computational reasons, we argue a discrete state and time Markov decision process. First, we propose a non-standard exponentially uniformized Markov decision process, which we show to be  \textit{stochastically equivalent} to the original CTMDP with a drift. 
However, the exponentially uniformized process still contains the age as a continuous state component.
Therefore, for further computational tractability, we argue a  discrete time and state Markov decision process. We then determine an optimal query assignment policy for the discrete time and state process by means of stochastic dynamic programming. Finally, we argue and numerically illustrate that the optimal policy also holds for the original CTMDP with a drift.

In addition, the performance of our optimal strategy is assessed numerically. We show that it outperforms in terms of average assignment costs three other feasible assignment heuristics, commonly used in practice.
Lastly, also in the case of real query traffic, our proposed policy achieves significant cost savings compared to the heuristics.
The results provide useful insight into deriving simple assignment strategies that can be readily used in practice.

The paper is structured as follows. In Section \ref{Model Formulation}, we describe the model of the query assignment problem and define it as a Discrete Time and Space Markov Decision Problem. In Section \ref{Numerical results}, we  assess numerically the performance of our proposed assignment policy and compare it with other feasible heuristics. Results for real traffic queries under our proposed policy and three heuristics are also presented. Concluding remarks are stated in Section \ref{Conclusion and Future work}.

\section{Model Formulation}
\label{Model Formulation}

In this section we introduce formally the query assignment problem. 
In section \ref{Continuous Time Markov Decision Process}, we define the query assignment problem as a Continuous Time Markov Decision Process (CTMDP) with a drift. Next, we construct an exponentially uniformized Markov Decision Process in section \ref{Exponential  Markov Decision Process}. We show that the uniformized Markov decision process and the continuous time  process (section  \ref{Continuous Time Markov Decision Process}) are stochastically equivalent.
This leads to the formulation of the assignment problem as a Discrete Time and Space Markov Decision Problem in section \ref{Discrete Time Markov Decision Process}.

\subsection{Model Description} \label{Model Description}
The system consists of a service facility (WSN) with processor sharing capabilities and a storage facility (DB).  Figure \ref{NEWSystem} shows the proposed model. 

Two types of jobs: queries and reports, arrive at the system according to a Poisson process. 
Queries arrive at rate $\lambda_1$. Reports arrive at rate $\lambda_{2}$. Reports are requests issued to the WSN to sense the environment and send the data to the DB. Reports update, therefore, the DB.
The service requirements of the jobs are exponentially distributed with parameter $\mu$, independently of the job type.  To ensure that the system is stable, we assume that $\lambda_2 <\mu$.
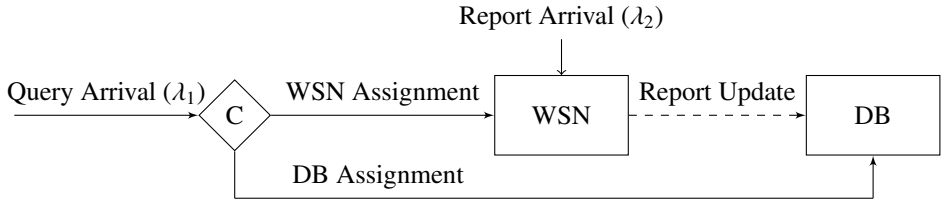
\begin{figure}[ht]
\tikzstyle{block} = [draw, fill=blue!0, rectangle, 
    minimum height=3em, minimum width=5em]
\tikzstyle{C} = [draw, fill=blue!0, diamond, node distance=2.9cm]
\tikzstyle{input} = [coordinate]
\tikzstyle{input2} = [coordinate]
\tikzstyle{output} = [coordinate]
\tikzstyle{pinstyle} = [pin edge={to-,thin,black}]

\begin{tikzpicture}[auto, node distance=2cm,>=latex']
    \node [input, name=input] {};
    \node [C, right of=input] (C) {C};
    \node [block, right of=C,   pin={[pinstyle]above:Report Arrival ($\lambda_{2}$)},node distance=4.3cm] (WSN) {WSN};
    \node [block, right of=WSN, node distance=4.1cm] (DB) {DB};
    \draw [->, dashed] (WSN) -- node[name=u] {Report Update} (DB);
    \node [output, right of=DB] (output) {};

    \draw [draw,->] (input) -- node {Query Arrival ($\lambda_{1}$)} (C);
    \draw [->] (C) -- node {WSN Assignment} (WSN);
    \draw [->](11.3,-1.1)-- node {DB Assignment \:\:\:\:\: \:\:\:\:\: \:\:\:\:\: \:\:\:\:\: \:\:\:\:\: \:\:\:\:\: \:\:\:\:\: \:\:\:\:\: \:\:\:\:\: \:\:\:\:\: \:\:\:\:\:} (DB);
    \draw[-](11.3,-1.1)--  (2.9,-1.1);
    \draw[-](2.9,-1.1)--(C);
\end{tikzpicture}
\caption{Proposed model incorporating a controller (C ), the database (DB) and the wireless sensor network (WSN). The DB solves queries assigned by the controller. The WSN solves reports and queries assigned by the controller.}
\label{NEWSystem}
\end{figure}

Incoming queries are handled by a controller which assigns them either to the DB or to the WSN. When assigned to the DB, queries are immediately answered with stored data. Assigned to the WSN, queries wait to receive the sensed data, sharing the service with the other jobs present in the network. 
The query assignment decision is based on the trade-off between the query waiting time and the age of the stored data, upon a DB assignment.

Several assignment heuristics are already used in practice, three of which we analyze in section \ref{Numerical results}. We are interested in investigating whether no further improvements in the  expected assignment cost can be achieved. As such, we formulate our problem as a Markov decision problem and find an optimal assignment policy that achieves a trade-off between the query waiting time and the validity (age) of the data provided to the queries.

\subsection{Stochastic Dynamic Programming Formulation}
 
As mentioned earlier, in order to make the assignment problem computationally tractable, we will follow three steps, to be found in sections 2.2.1, 2.2.2 and 2.2.3.

\subsubsection{Continuous Time Markov Decision Process with a Drift} \label{Continuous Time Markov Decision Process}
The system in section \ref{Model Description} can be described as a CTMDP with a drift as stated below.
For an introduction to CTMDP with a drift we refer to  \cite{hordijk1984discretization}.

Firstly, at any point in time, the system is completely described by the number of queries, reports and the age of the data stored in the DB. Thus,  the state space of the problem is defined as follows:
\begin{itemize}
\item State space $S= \mathbb{N}_{0} \times \mathbb{N}_{0} \times (0,\infty)$, 
where $(i,j,t) \in S$denotes the state in which there are $i$ queries in the WSN, $j$ reports and $t$ the time since the last report completion (age of the stored data).
\end{itemize}

Upon a query arrival, the controller assigns the query either to the DB or to the WSN. The action space is, thus, defined as:
\begin{itemize}
\item Action: the controller takes an action $d$ from the action space $D=\{DB, WSN\}$, where $d=DB$ denotes a DB assignment and $d=WSN$ denotes a WSN assignment. 
\end{itemize}
We define a policy $\pi$ to be a mapping from the state space $S \rightarrow D$, which specifies the action $d \in D$ the controller takes when the system is in state $(i,j,t) \in S$ and a query arrival occurs. We make the natural assumption that this policy is left-continuous in the age component $t$, which allows for threshold-type of assignment policies of the form $t>T$, where T is a threshold.

The system has a state transition upon a query arrival, a report arrival, a query completion or a report completion. The rates at which these events happen  are as follows:
\begin{itemize}
\item The transition rates, when in state $(i,j,t) \in S$ and action $d \in D$ is taken:
\begin{align}\label{TransitionRates}
 q^{d}[(i,j,t),(i,j,t)'] =
\begin{cases}
\lambda_{1},  & (i,j,t)'=(i+1,j,t), \:\:\:\:\:\:\:\:\:\:\:\:\:\:\: d = WSN  \\
\lambda_{1},  & (i,j,t)'=(i,j,t), \:\:\:\:\:\:\:\:\:\:\:\:\:\:\:\:\:\:\:\:\:\: d = DB  \\
\lambda_{2},  & (i,j,t)'=(i,j+1,t) \\
\mu \phi_{1}(i,j), &(i,j,t)'=(i-1,j,t), i>0 \\
\mu \phi_{2}(i,j), &(i,j,t)'=(i,j-1,0), j>0 \\
\end{cases}
\end{align}
\end{itemize}
with $\phi_{1}(i,j)=\frac{i}{i+j}, \phi_{2}(i,j)=\frac{j}{i+j}$ indicating the Processor Sharing service discipline assumed for the WSN.
The first  line of (\ref{TransitionRates}) models a query arrival under action $d=WSN$, i.e. the query is assigned to the WSN for processing. The state space illustrates an increment in the number of queries from $i$ to $i+1$. 
The second line of (\ref{TransitionRates}) models a query arrival under action $d=DB$, i.e. the query is assigned to the DB. In this case, the query is solved immediately, no changes occur in the number of the queries and reports in the system. 
The third line of (\ref{TransitionRates}) models a report arrival. The state of the system illustrates an increment in the number of reports.
The fourth line of (\ref{TransitionRates}) models a query completion at the Processor sharing rate $\phi_{1}(i,j)=\frac{i}{i+j}$. The number of queries in the system is decremented to $i-1$. Lastly, the fifth line of (\ref{TransitionRates}) models a report completion at the  Processor sharing rate  $\phi_{2}(i,j)=\frac{j}{i+j}$. The age of the stored data is reset to zero as the report updates the DB with the most recently sensed data.

The above Markov decision process has a deterministic drift for the age component, $t$.
This increases linearly as long as no report is completed. 
Also, we consider two types of decisions. Firstly, the decision to assign an incoming query to the DB affects only the infinitesimal generator of the Continuous Time Markov Decision Process (see second line of (\ref{TransitionRates})). Secondly, the decision to assign a query to the WSN affects both the infinitesimal generator and determines a change in the state of the system (see first line of  (\ref{TransitionRates})).

The dynamics of this controlled Markovian decision process are uniquely determined by its infinitesimal generators (see, for instance, \cite{dynkin1965markov}). In the case of our system described above, under action $d$, the generator is specified, for any function $f:S \times S \times (0,\infty) \rightarrow \mathbb{R}$, as follows:
\begin{align}\label{Agenerator}
\textbf{A}^{d}f(i,j,t)=\sum \limits_{(i,j,t)'} q^{d}[(i,j,t),(i,j,t)'] \cdot f[(i,j,t)']+\frac{d}{dt}f(i,j,t)
\end{align}
The generator stated in (\ref{Agenerator}) shows that, over time, a jump to a new state $(i,j,t)'$ occurs at rate $q^{d}$ or no jump occurs and time increases.

The cost of the system is two-folded. Firstly, we consider the cost $i$ of having $i$ queries waiting within the WSN to be processed. This cost gives an overview of the load of the WSN over time. Having a large number of queries in the WSN incurs penalties as the queries need to wait more to be processed. Secondly, we consider an instantaneous cost incurred every time a query is solved by the DB. We incur a penalty for each time unit the age of the stored data exceeds a given threshold $T$. The two costs illustrate the trade-off between the waiting time of the queries within the WSN and the age of the DB data provided to the queries.
Formally, this is expressed by:
\begin{itemize}
\item Cost: when in state $(i,j,t)$, a cost rate $i$ for the queries waiting in the WSN and an instantaneous cost $(t-T)^+$, where $x^{+}=\max(x,0)$, upon a DB assignment.
\end{itemize}
The cost function assumes no explicit communication times. When queries are assigned to the WSN, the communication time is implicitly included in the time the query waits to be processed. In the case of a DB assignment, the processing and communication time are negligible compared to the time a query is processed within the WSN. Therefore, we assume a query is immediately processed when assigned to the DB.

\subsubsection{Exponentially Uniformized Markov Decision Process} \label{Exponential Markov Decision Process}

The continuous character of the process described in subsection \ref{Continuous Time Markov Decision Process}, and in particular, the continuous age component of the process evolving in time, make the problem computationally intractable, i.e. the standard way of deriving an optimal policy recursively by using dynamic programming is not applicable for a Continuous Time Markov Decision Process with a drift.

To this end, uniformization, a method commonly used to make a continuous time MDP computational tractable, is not applicable due to the drift (age component evolving in time) of our process. Uniformization, as introduced in \cite{jensen1953markoff}, is a well-known technique used to transform a continuous time Markov jump process (see \cite{gikhman2004theory, van1990simple}) into a discrete time Markov process.  When the state is also discrete, it is referred to as a discrete time Markov chain. 

In \cite{hordijk1984discretization} and \cite{van1996time}, time discretization is applied to continuous time Markov decision processes with a drift component evolving in time. This is a somewhat similar method to uniformization. Time discretization is, however, an approximative method and leads to technical weak convergence and not exact results for computational purposes, as aimed in this paper. 
Therefore, to be able to compute an optimal query assignment policy, in what follows below we construct an exponentially uniformized Markov Decision Process, and show it to be stochastically equivalent to the initial CTMDP with a drift. This implies that the two processes are the same in terms of expected assignment costs and policies.
We can then argue both a discrete time and state Markov decision process which is computational tractable, i.e. we are able to compute an optimal assignment policy. We argue and show numerically that this policy also holds for the CTMDP with a drift (section \ref{Continuous Time Markov Decision Process}).

We now uniformize the CTMDP with a drift described in  subsection \ref{Continuous Time Markov Decision Process} as follows:

Let $B$ be an arbitrarily large finite number such that $B \geq \lambda_{1}+\lambda_{2}+\mu$. At exponential times with parameter $B$, the system will have a transition. 
Denote by $s$ the exponential realization time of this transition.
Then, given the state space assumed in subsection \ref{Continuous Time Markov Decision Process} and the transition realization of duration $s$, the transition probabilities under action $d \in D$, from one transition epoch to the next, become:
\[
P^{d}[(i,j,t),(i,j,t)'] =
\begin{cases}
\lambda_{1}B^{-1}, & (i,j,t)'=(i+1,j,t+s), \:\:\:\:\:d=WSN\\
\lambda_{1}B^{-1},  & (i,j,t)'=(i,j,t+s), \:\:\:\:\:\:\:\:\:\:\:\: d=DB\\
\lambda_{2}B^{-1},  & (i,j,t)'=(i,j+1,t+s)\\
\mu B^{-1} \phi_{1}(i,j), &(i,j,t)'=(i-1,j,t+s), i>0\\
\mu B^{-1} \phi_{2}(i,j), &(i,j,t)'=(i,j-1,0), j>0 \\
1-(\lambda_{1}+\lambda_{2}+\mu \textbf{1}_{ i+j>0})B^{-1},  &(i,j,t)'=(i,j,t+s)\\
0, &\textrm{otherwise}
\end{cases}
\]
%
\begin{thm}\label{lemma}
For any policy $\pi$, the  exponentially uniformized Markov Decision Process and the original Continuous Time Markov Decision Process with a drift are stochastically equivalent.
\end{thm}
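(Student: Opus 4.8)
The plan is to show that, under an arbitrary (left-continuous, threshold-type) policy $\pi$, the two processes have the same law as strong Markov processes, by comparing their one-step mechanism out of a generic state and then lifting to all finite-dimensional distributions via the strong Markov property and induction. The only feature that goes beyond classical uniformization of a CTMC is the deterministic drift of the age $t$, so the jump structure and the age must be tracked together throughout.

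First I would attach to the uniformized chain the continuous-time object it represents: let $0=\tau_0<\tau_1<\tau_2<\cdots$ be the transition epochs, the increments $s_n:=\tau_n-\tau_{n-1}$ i.i.d.\ $\mathrm{Exp}(B)$, and build a right-continuous, left-limited process by letting the age grow linearly at rate one on each $[\tau_{n-1},\tau_n)$ and applying, at each $\tau_n$, the transition $P^{\pi(\cdot)}$ of the statement --- with the convention that the age increment $s$ appearing in $P^{d}$ is precisely $\tau_n-\tau_{n-1}$ (and is overwritten by $0$ at a report completion). One checks this is a well-defined strong Markov process: $(i,j)$ changes only at epochs, the age is a deterministic functional of the epochs and the past resets, and the $s_n$ are independent of everything else.

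The core step is the one-step comparison from a generic state $(i,j,t)$. In the original CTMDP with drift, using $\phi_1(i,j)+\phi_2(i,j)=1$, the time $\sigma$ to the next \emph{relevant} event --- query arrival, report arrival, query completion, report completion --- is $\mathrm{Exp}(\Lambda)$ with $\Lambda:=\lambda_1+\lambda_2+\mu\mathbf{1}_{\{i+j>0\}}$; on $[0,\sigma)$ the age drifts to $t+\sigma$ (no report completes, hence no reset); and, given $\sigma$, the event is a query, a report, a query-completion, or a report-completion event with probabilities $\lambda_1/\Lambda,\ \lambda_2/\Lambda,\ \mu\phi_1(i,j)/\Lambda,\ \mu\phi_2(i,j)/\Lambda$. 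In the uniformized process, the number $N$ of ticks until the first relevant event is geometric with success probability $\Lambda B^{-1}$, independent of the event type realized at tick $N$; the classical identity that a sum of a $\mathrm{Geom}(\Lambda B^{-1})$ number of i.i.d.\ $\mathrm{Exp}(B)$ variables is $\mathrm{Exp}(\Lambda)$ gives $\tau_N\sim\mathrm{Exp}(\Lambda)$; ticks $1,\dots,N-1$ are dummy self-transitions or DB-assignment ticks, none of which touches $(i,j)$ or resets the age, so on $[0,\tau_N)$ the age drifts to $t+\tau_N$; and, conditional on a relevant event, the type probabilities are $(\lambda_1B^{-1})/(\Lambda B^{-1})=\lambda_1/\Lambda$, and likewise for the others. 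Hence the joint law of (time to the next relevant event, age at that instant, event type) agrees in the two models; the policy then picks the same action from the same state and produces the same transition (increment $i$ on a WSN assignment, leave the state unchanged on a DB assignment, decrement $i$ or $j$ on a completion, reset $t$ on a report completion). Iterating over successive relevant epochs --- formally, the strong Markov property together with induction on the epoch index --- yields identical finite-dimensional distributions for every $\pi$. In particular the age at each query-arrival epoch has the same law, so the cost stream (rate $i$ while $i$ queries wait in the WSN, plus the instantaneous penalty $(t-T)^+$ at each DB assignment) has the same law; the expected assignment costs, hence the value functions, coincide, and a policy is optimal for one model if and only if it is optimal for the other.

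\textbf{The main obstacle} is precisely the drift: one must realize, on a single probability space, (i) the geometric-sum identity that makes the inter-event time $\mathrm{Exp}(\Lambda)$, (ii) the thinning that selects which relevant event occurs, and (iii) the ``$+s$'' bookkeeping that makes the accumulated tick times reproduce the continuous linear growth of the age --- and verify these three are mutually consistent. The remaining points are routine but worth stating: dummy ticks and DB-assignment ticks are invisible to the $(i,j)$-dynamics, yet the DB ticks still generate the penalty $(t-T)^+$ at the correct rate $\lambda_1$; at the boundary $i+j=0$ the exit rate $\Lambda$ drops to $\lambda_1+\lambda_2$ and the freed mass $\mu B^{-1}$ is simply absorbed into the dummy self-transition line, so the argument is unchanged; and $\pi$ is evaluated at the (random) event epochs without ambiguity thanks to its assumed left-continuity in $t$. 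One could instead compute the infinitesimal generator of the drifting continuous-time version of the uniformized chain and check it equals $\mathbf A^{d}$ of (\ref{Agenerator}), but the coupling above is cleaner, since in the raw uniformized chain the age jumps rather than drifts.
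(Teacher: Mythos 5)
Your proof is correct, but it takes a genuinely different route from the paper's. The paper argues at the level of infinitesimal generators: it computes $P^{d}_{\Delta t}f(i,j,t)$ for the uniformized process by conditioning on the first exponential tick, expands to first order in $\Delta t$, and shows that the limit of $[P^{d}_{\Delta t}f - f]/\Delta t$ is exactly the generator $\mathbf{A}^{d}$ of (\ref{Agenerator}); stochastic equivalence then follows from the principle that the generator uniquely determines the process. You instead give a direct probabilistic identification of the two processes: the geometric-sum-of-exponentials identity makes the time to the next relevant event $\mathrm{Exp}(\Lambda)$, thinning reproduces the correct conditional event-type probabilities, the accumulated tick durations reproduce the linear growth of the age (untouched by dummy ticks, reset only at report completions), and the strong Markov property plus induction on event epochs lifts the one-step agreement to all finite-dimensional distributions. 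Your route is more elementary and buys more: it yields a pathwise coupling, hence equality in law of the entire cost stream rather than only of expected costs, and it sidesteps the $o(\Delta t)$ bookkeeping needed to handle the drift term in the generator computation. The paper's route is shorter once one accepts the generator-determines-process principle for this piecewise-deterministic setting. One presentational slip you should fix: you classify query arrivals as relevant events terminating the geometric waiting time, yet also list DB-assignment ticks among the non-relevant intermediate ticks $1,\dots,N-1$; choose one convention (e.g., every query arrival is relevant, and a DB assignment is a relevant event that leaves $(i,j)$ unchanged while incurring the penalty $(t-T)^{+}$). As you observe, the argument goes through under either convention, but the statement of the geometric waiting time must be consistent with the one chosen.
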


\begin{proof}
\ref{proofLemma}
\end{proof}
One consequence of Theorem  is that the expected assignment cost for the exponentially uniformized MDP and the CTMDP with a drift are the same. This, in turn, leads to the same optimal policy for the two processes

Now observe that in the CTMDP with a drift, the actions are only taken upon query arrivals, which occur at exponential times. In the case of the exponentially uniformized MDP, the exponential times have parameter $B$.
Thus, the actions will still be taken at exponential times with parameter $B$, upon a query arrival. 
Therefore, it is sufficient to keep track of the number of exponential phases $N$  (Erlang distribution with parameter $B$ and N phases).
This allows us to restrict ourselves to a discrete time and space Markov decision process in section \ref{Discrete Time Markov Decision Process}. A discrete time and space MDP enables us to compute an optimal assignment policy in section \ref{Numerical results}.

In the next section, therefore, we restrict ourselves to a discrete space and time Markov Decision Problem, with $S= \mathbb{N}_{0} \times \mathbb{N}_{0} \times \mathbb{N}_{0}$, where $(i,j,N) \in S$ denotes the state in which there are $i$ queries, $j$ reports and $N$ steps since last report completion, i.e. the age of the data is given by the number of exponential phases $N$ . 

\subsubsection{Discrete Time and Space Markov Decision Problem}\label{Discrete Time Markov Decision Process}

Based on the  exponentially uniformized model in Section \ref{Exponential Markov Decision Process}, we formulate our assignment problem as a Discrete Time and Space Markov Decision Problem (DTMDP) as follows:
\begin{itemize}
\item State space: $S= \mathbb{N}_{0} \times \mathbb{N}_{0} \times \mathbb{N}_{0} $, where $(i,j, N) \in S$ denotes the state with $i$ queries and $j$ reports in the WSN and $N$ is the age of the stored data, with $N$ the number of steps (exponentially distributed with uniformization parameter $B$) since the last report completion. 
\item Action space: 
Upon a query arrival, the controller takes an action $d$ from the action space $D=\{DB, WSN\}$, where $d=DB$ is a DB assignment and $d=WSN$ is a WSN assignment. 
\item Transition probabilities, when the system is in state $(i,j,N) \in S$ and action $d \in D$ is taken, are as follows:

\begin{align} \label{TransitionProbabilities}
P^{d}[(i,j,N),(i,j,N)'] =
\begin{cases}
\lambda_{1}', & (i,j,N)'=(i+1,j,N+1), \:\:\:\:\:d=WSN\\
\lambda_{1}',  & (i,j,N)'=(i,j,N+1), \:\:\:\:\:\:\:\:\:\:\:\: d=DB\\
\lambda_{2}',  & (i,j,N)'=(i,j+1,N+1) \\
\mu' \phi_{1}(i,j), &(i,j,N)'=(i-1,j,N+1), i>0 \\
\mu' \phi_{2}(i,j), &(i,j,N)'=(i,j-1,0), j>0 \\
1-(\lambda_{1}'+\lambda_{2}'+\mu' \textbf{1}_{ i+j>0}),  &(i,j,N)'=(i,j,N+1) \\
0, &\textrm{otherwise}
\end{cases}
\end{align}
with $\phi_{1}(i,j)=\frac{i}{i+j}$, $\phi_{2}(i,j)=\frac{j}{i+j}$ and $\lambda_{i}'=\lambda_{i}B^{-1}, i\in\{1,2\}$ and $\mu'=\mu B^{-1}$ as per uniformization (see subsection \ref{Exponential Markov Decision Process}).
The first two lines of (\ref{TransitionProbabilities}) model query arrivals under action $d$. The third line  of (\ref{TransitionProbabilities}) models report arrivals. The fourth and fifth lines  of (\ref{TransitionProbabilities}) model query and report completions, respectively. The sixth line  of (\ref{TransitionProbabilities}) is a dummy transition as a result of the uniformization. The last line  of (\ref{TransitionProbabilities}) prohibits any other state transition. Notice that in every step, the age is incremented, except the case when a report is completed. Then, the age is reset to zero.

\item Cost function: The cost of the system is two-folded. Firstly, when $i$ queries are waiting to be solved within the WSN, the system incurs a cost  per unit of time:
\begin{equation}\label{costW}
i
\end{equation} This can be interpreted as, each unit of time, the system pays one unit for each waiting query. At the end of a query's service, the system would have payed one unit for each unit of time the query was in the system, i.e. the query waiting time. 
Secondly, if an incoming query is assigned to the DB, an instantaneous  penalty  is incurred for exceeding the validity tolerance $T$  of the stored data:
\begin{equation} \label{costD}
\max (N'-T)^{+}, \:\: (x)^{+}=\max\{0,x\}.
\end{equation} where $N'=N/B$ is the age of the data in time units, i.e. the number of uniformization steps multiplied by the expected length of a step. 
In this case, the system pays for the time the data validity is exceeded.
Considering the cost of having queries waiting in the WSN (\ref{costW}) and the instantaneous cost associated with a DB assignment (\ref{costD}), when the system is in state $(i,j,N)$, the cost incurred per unit of time is:
\begin{equation}
C(i,j,N)=i+\lambda_1 (N'-T)^+\textbf{1}_{(d=DB)}, \textrm{ where} (x)^{+}=\max\{0,x\}.
\end{equation} 
\end{itemize}

\textbf{Remark}: The number of exponential phases approximates the time until a report completion by $t+s=(N+1) \cdot B^{-1}$. 
Also, the variance of an Erlang distribution with $N$ phases and parameter $B$, which is the case for our discretized age, is $\frac{N+1}{B^2}$. 
As $B \geq \lambda_1+\lambda_2+\mu$ can be chosen arbitrarily large (see \cite{hordijk1982weak}), by the law of large numbers, for very large $B$, the distribution of Erlang(N+1,B) will concentrate around $(N+1) \cdot B^{-1}$.
Thus, for large uniformization parameter $B$, the discrete time and state MDP approximates the uniformized MDP arbitrarily close. 

On expectational basis, the value of the uniformization parameter $B \leq \lambda_1+\lambda_2+\mu$ can be seen as a scaling factor that does not influence the results. Several examples have been investigated in \ref{B} also showing no effect of $B$ on the assignment policy. 
One could expect that for small values of $B$, a minor effect on the policy might be present due to the approximation of the age component $N'=N/B$ (see (\ref{costD})). However, we have not been able to find any such example. In other words, the approach followed is  strongly supported, both theoretically and numerically.

Now, the quadruple $(S,D,P,C)$ completely describes the discrete time and state MDP.
\\
To determine an optimal assignment policy and to use standard dynamic programming, we define the following value function:
\begin{equation*}
\textbf{V}_{n}(i,j,N) :=\textrm{minimal expected assignment cost over } n \textrm{ steps starting in state } (i,j,N).
\end{equation*}
Then $\textbf{V}_{n}(i,j,N)$ is computed recursively by means of the value iteration algorithm (see, for instance,  \cite{puterman1994markov}  Section 8.5.1 ) as follows:

First, we consider $\textbf{V}_{0}(i,j,N)=0$. Next, we iterate according to the  value iteration algorithm and the following backward recursive equation:

\begin{equation}
\label{eq:backward} \textbf{V}_{n+1}(i, j, N) =
\begin{cases}
                                           i' + \lambda_{1}' \min
							\begin{cases}
							V_{n}(i+1, j, N+1)\\
 							(N-T')^{+}+V_{n}(i, j, N+1)
							\end{cases}  \\
				+\lambda_{2}' V_{n} (i, j+1, N+1)    \\
				+ \mu' \phi_{1}(i,j) V_{n} (i-1, j, N+1) \textbf{1}_{i>0}    \\
				+  \mu' \phi_{2}(i,j) V_{n} (i, j-1, 0)  \textbf{1}_{j>0}   \\
				+ [1-(\lambda_{1}'+\lambda_{2}'+\mu'\textbf{1}_{i+j>0})] V_{n}(i, j, N+1).   
\end{cases}
\end{equation} where $i'=i/B$ and $T'=T/B$, following uniformization.
The first term of \eqref{eq:backward} is the cost of having $i$ queries in service and  a query assigned to either the WSN or the DB.
The next three terms represent the cost incurred by a transition due to a report arrival, a query completion and a report completion, respectively. 
Lastly, the final term is the dummy term due to uniformization.

Simultaneously with computing $\textbf{V}_{n}(i,j,N)$, the algorithm computes a  $\epsilon$-optimal stationary policy $\pi_{n}$ which associates an optimizing action with the right-hand side of ~\eqref{eq:backward} for any state $(i,j,N)$. 
Given the assignment policy, it is possible to compute the average assignment cost. 

Denote the minimal average assignment cost by $g^{*}$.
Since the underlying Markov chain is ergodic, $g^{*}$ is independent of the initial state. 
We approximate $g^{*}$ using the following bounds introduced in \cite{odoni1969finding}:
\begin{align} \label{bounds}
L'_{n} &\leq g^{*} \leq L''_{n}, \textrm{  where }  \\ %
 L'_{n} &=\min[V_{n+1}(i,j,N)-V_{n}(i,j,N)],  \notag \\
L''_{n} &=\max[V_{n+1}(i,j,N)-V_{n}(i,j,N)]. \notag
\end{align}
In (\ref{bounds}), $L'_{n}$ is the minimum difference of  the value function over two iteration steps, $n$ and $n+1$, whereas $L''_{n}$ is the maximum difference of the value function over steps $n$ and $n+1$. For $n\rightarrow \infty$, $L'_{n}$ and $L''_{n}$ become arbitrarily close.

The optimal cost $g^{*}$ is computed with an accuracy $\epsilon$ by iterating the right-hand side of \eqref{eq:backward} for $n$ times until  
$L"_{n}- L'_{n} \leq \epsilon/B$ with $B$ the uniformization parameter.
Then, the average assignment cost is approximated as $\displaystyle g^{*} \sim \frac{(L"_{n}+L'_{n})}{2}.$
It can be shown that the lower and upper bound converge in a finite number of steps (Theorem 8.5.4 \cite{puterman1994markov} ) to the optimal cost. 

\section{Numerical Results}\label{Numerical results}

\subsection{Numerical Results - Optimal Query Assignment Policy} \label{Numerical Results - Optimal Query Assignment Policy}
Based on the Discrete Time and State Markov Decision Process defined in subsection \ref{Discrete Time Markov Decision Process}, we were able to compute an optimal query assignment policy.
%
%
%
%
\begin{figure}[h]
\centering
\includegraphics[scale=1]{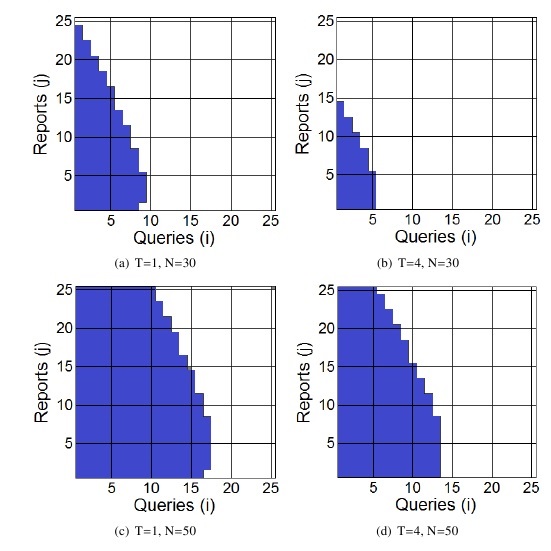}
\caption{WSN assignment (blue) and DB assignment (white) with $\lambda_{1}=0.8$, $ \lambda_{2}=0.5$ and $\mu=1.8$. $N$ is the age of the data and $T$ the validity threshold.}
\label{optimal}
\end{figure}
Figure \ref{optimal} shows what action is optimal when the system is in state $(i,j,N)$ and data validity tolerance $T$ is assumed.

\subsection{Fixed Heuristics Policies for Performance Comparison} \label{Fixed Heuristics Policies for Performance Comparison}

In practice, simple assignment policies are employed to manage the query traffic. We consider the following three assignment heuristics, derived from practical assignment strategies:
\begin{itemize}
\item A fixed heuristic policy $\pi^{Db}$ that always assigns incoming queries to the DB. 
Upon a query arrival, the cost incurred is $(N-T)^{+}$. 
\item A fixed heuristic policy $\pi^{W}$ that always assigns incoming queries to the WSN.  
\item A heuristic policy $\pi^{T}$ that always assigns incoming queries to the DB  if the age does not exceed the tolerance, i.e. $N<=T$, and to the WSN otherwise. 
\end{itemize}

The following theorem shows what are the expected assignment costs incurred by the  $\pi^{Db}$ and $\pi^{W}$ heuristics when data validity threshold $T$ is assumed.
\begin{thm}\label{lemma2}
Assuming the DTMDP parameters $\lambda_{1}',\lambda_{2}' \textrm{ and } \mu'$, the average assignment cost of the heuristics $\pi^{Db}$ and $\pi^{W}$ are as follows,
\begin{equation}\label{costDB} 
C_{\pi^{Db}}=\frac{ \lambda_{1}' (1-\lambda_{2}')^{T+1}  }{\lambda_{2}'}
\end{equation}

\begin{equation}\label{costWSN}
C_{\pi^{W}}=\frac{\lambda_{1}'}{\mu'-(\lambda_{1}'+\lambda_{2}')}
\end{equation}

 \end{thm}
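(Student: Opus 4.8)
The plan is to handle the two heuristics separately; in each case fixing the action removes the control and leaves a classical queueing computation.

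For $\pi^{W}$, every query is routed to the WSN, so there are no DB assignments and the only cost is the holding cost $i$ of the queries in the WSN; hence $C_{\pi^{W}}=\mathbb{E}[i]$ under the stationary law of the chain $(i,j)$ (the age $N$ plays no role). By \eqref{TransitionProbabilities} with $d=WSN$, $(i,j)$ is a two-class M/M/1 processor-sharing queue with class arrival probabilities $\lambda_1',\lambda_2'$ and a class-independent completion probability $\mu'$, stable when $\lambda_1+\lambda_2<\mu$. First I would observe that the total occupancy $n=i+j$ is a birth--death chain with birth probability $\lambda_1'+\lambda_2'$ and death probability $\mu'$, hence geometric with $\mathbb{E}[n]=\rho/(1-\rho)$, $\rho=(\lambda_1'+\lambda_2')/\mu'$. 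Then, because queries and reports have identical exponential service requirements, conditional on the set of jobs present each job is independently a query with probability $\lambda_1'/(\lambda_1'+\lambda_2')$, so $\mathbb{E}[i\mid n]=n\,\lambda_1'/(\lambda_1'+\lambda_2')$ and
\[
C_{\pi^{W}}=\mathbb{E}[i]=\frac{\lambda_1'}{\lambda_1'+\lambda_2'}\cdot\frac{(\lambda_1'+\lambda_2')/\mu'}{1-(\lambda_1'+\lambda_2')/\mu'}=\frac{\lambda_1'}{\mu'-(\lambda_1'+\lambda_2')},
\]
which is \eqref{costWSN}. (Equivalently one may cite the BCMP product form $\pi(i,j)\propto\binom{i+j}{i}(\lambda_1'/\mu')^{i}(\lambda_2'/\mu')^{j}$ and sum directly.)

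For $\pi^{Db}$, every query goes to the DB, so $i\equiv0$ and the WSN holds only reports; by \eqref{TransitionProbabilities} the report count is then an ordinary M/M/1 queue with per-step arrival and service probabilities $\lambda_2'$ and $\mu'$ (stable since $\lambda_2<\mu$). Here the cost is only the instantaneous DB penalty, incurred whenever a query arrives, which happens with probability $\lambda_1'$ per step independently of the state, so $C_{\pi^{Db}}=\lambda_1'\,\mathbb{E}\big[(N-T)^{+}\big]$ under the stationary law of the age $N$. The essential step is to identify that law, and for this I would invoke Burke's theorem: in steady state the report-completion (departure) epochs of this M/M/1 queue form a Bernoulli$(\lambda_2')$ process independent of the past occupancy, so $N$ --- the number of steps back to the last report completion --- is the backward-recurrence time of a Bernoulli$(\lambda_2')$ process and is therefore geometric, $\mathbb{P}(N=n)=\lambda_2'(1-\lambda_2')^{n}$, $n\ge0$. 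It then remains to compute, for integer $T\ge0$,
\[
\mathbb{E}\big[(N-T)^{+}\big]=\sum_{n>T}(n-T)\lambda_2'(1-\lambda_2')^{n}=\lambda_2'(1-\lambda_2')^{T}\sum_{k\ge1}k(1-\lambda_2')^{k}=\frac{(1-\lambda_2')^{T+1}}{\lambda_2'},
\]
using $\sum_{k\ge1}kx^{k}=x/(1-x)^{2}$; multiplying by $\lambda_1'$ yields \eqref{costDB}.

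The main obstacle is the stationary law of $N$ in the $\pi^{Db}$ case: since $N$ resets only at report completions, whose rate depends on $j$, the process $N$ is not Markov on its own and is correlated with $j$, so a direct balance-equation analysis of its marginal is awkward. Burke's theorem (equivalently, the reversibility of M/M/1) circumvents this by making the departure epochs an i.i.d.\ Bernoulli sequence, after which $N$ is geometric and the rest is elementary. For $\pi^{W}$ the only delicate point is the same-service-law symmetry used to split $\mathbb{E}[i]$ out of the geometric $\mathbb{E}[n]$; the remainder is routine birth--death algebra and geometric summation.
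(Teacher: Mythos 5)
Your proposal is correct, and the two halves deserve separate verdicts. For $\pi^{W}$ you argue exactly as the paper does: the total occupancy $i+j$ is a standard M/M/1(-PS) quantity with mean $\rho/(1-\rho)$, and the holding cost is obtained by thinning with the fraction $\lambda_1'/(\lambda_1'+\lambda_2')$ of jobs that are queries (the paper asserts this split in one line; your appeal to the identical service laws, or to the product form $\pi(i,j)\propto\binom{i+j}{i}(\lambda_1'/\mu')^i(\lambda_2'/\mu')^j$, is the justification it leaves implicit). For $\pi^{Db}$ your route is genuinely different. The paper writes down balance equations for the marginals $\pi_j(\cdot)$ and $\pi_N(\cdot)$ of the two-dimensional chain $(j,N)$, solves for $\pi(0,0)$, and grinds out $\pi_N(N)=\lambda_2'(1-\lambda_2')^N$ directly; you instead observe that the age $N$ is the backward recurrence time of the report-departure process and invoke the discrete-time Burke theorem (equivalently, reversibility of the birth--death chain for $j$) to conclude that this departure process is i.i.d.\ Bernoulli$(\lambda_2')$ in stationarity, whence $N$ is geometric. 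Both arguments land on the same law $\mathbb{P}(N=n)=\lambda_2'(1-\lambda_2')^n$ and the same tail sum $\lambda_1'\,\mathbb{E}[(N-T)^+]=\lambda_1'(1-\lambda_2')^{T+1}/\lambda_2'$. Your version is shorter and correctly isolates why the marginal of $N$ is tractable at all (as you note, $N$ alone is not Markov and is correlated with $j$); the paper's version is more elementary and self-contained, needing only global balance rather than a reversibility result, at the cost of more bookkeeping. The one point you should make explicit if you write this up is that the Burke/reversibility statement you need is the one for the uniformized (single-event-per-slot) discrete-time queue, which does follow from the reversibility of the birth--death transition structure in \eqref{TransitionProbabilities}, but is not literally the classical continuous-time statement.
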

 
\begin{proof}\ref{CostWSN} \end{proof}

\subsection{Simulation results}

We compare the performance of the proposed assignment policy, i.e.  the associated average cost ($g^{*}$), as defined in Section \ref{Model Formulation}, with the average assignment cost of the heuristics proposed in subsection \ref{Fixed Heuristics Policies for Performance Comparison} by means of a discrete event simulation. 
While for the  $\pi^{Db}$ and $\pi^{W}$ heuristics exact results are derived in Theorem \ref{lemma2}, we use simulation to compute the average assignment costs for heuristic $\pi^T$. Moreover, we use simulation as we are interested in the fraction of time the DB or the WSN are used. This gives us an indication of the load of the WSN over time. 

Simulation results show that, compared with th heuristics, the proposed policy, described in subsection \ref{Discrete Time Markov Decision Process}, achieves a lower average assignment cost  (Figure \ref{aaa}).  The cost difference is significant for small time tolerances. This is of particular interest for real-time applications which specify low time tolerances. 
At the limit, $T \rightarrow \infty$, both $\pi^{T}$ and $\pi^{Db}$ approach the optimal policy. In this case, the stored data is considered valid for a long time. Consequently, DB assignments under $\pi^{T}$, $\pi^{Db}$ and $\pi^{OPT}$ become more frequent under all these policies  (Figure \ref{aaa4}) and the costs converge to the cost of the optimal policy. In short, for large validity tolerances, it is always optimal to send incoming queries to the DB.
\begin{figure}[ht]\label{f}
 \centering 

 \subfigure[Average Assignment Cost]
 {  
\includegraphics[scale=0.6]{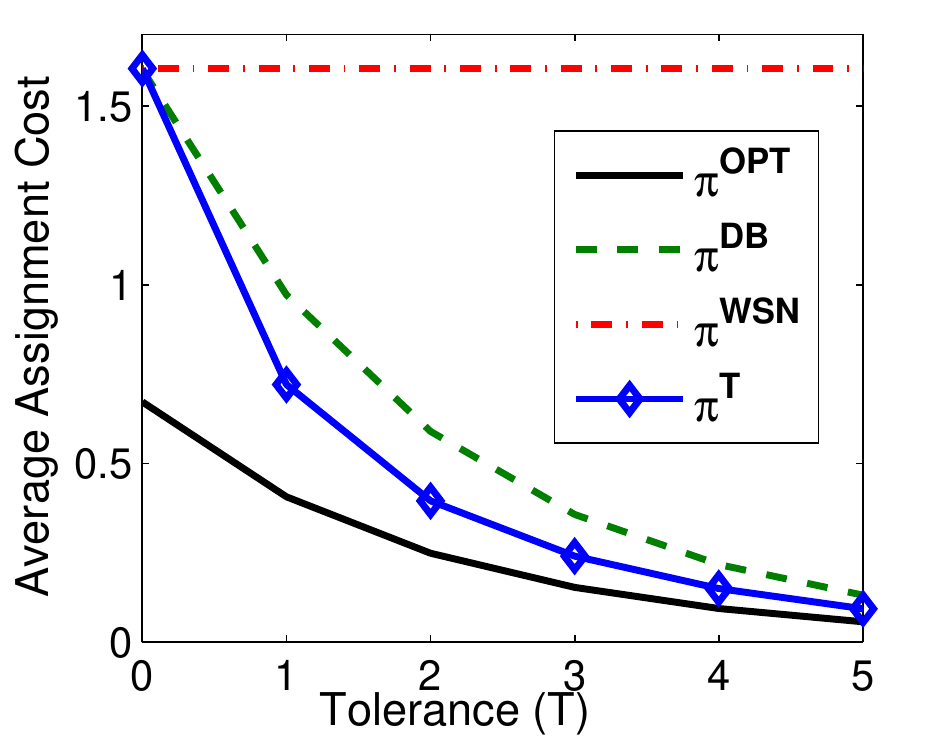}\label{aaa} 
 }  
 \subfigure[DB utilization]
 {   
\includegraphics[scale=0.6]{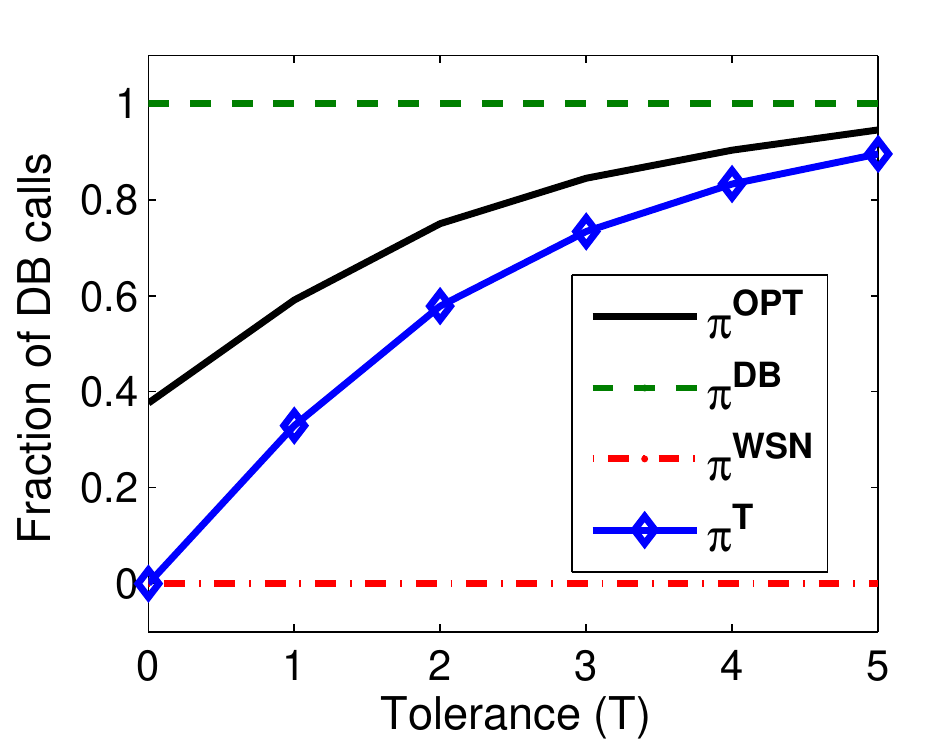}\label{aaa4} 
 }  
\caption{ Average Assignment Cost and DB utilization assuming query and report arrival rates $\lambda_{1}=0.8$ and $ \lambda_{2}=0.5$, respectively and WSN service rate $\mu=1.8$}
 \end{figure} 

Also in the case of increasing query arrival rate (Figures \ref{aaa2} and \ref{aaa3}) or WSN processing capabilities, the optimal policy outperforms the heuristic policies in terms of average assignment costs (Figures \ref{mu1} and \ref{mu2}). 
\begin{figure}[ht]
 \centering 
 \subfigure[Average Assignment Cost]
 {  
\includegraphics[scale=0.6]{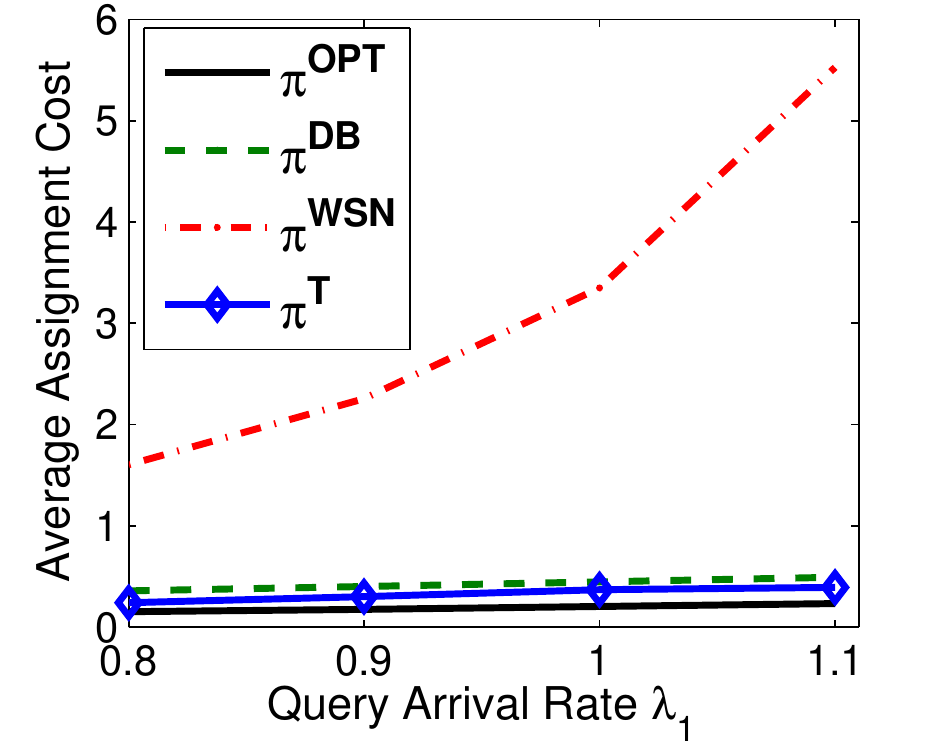}\label{aaa2} 
 }  
 \subfigure[Average Assignment Cost - Zoom in]
 {   
\includegraphics[scale=0.6]{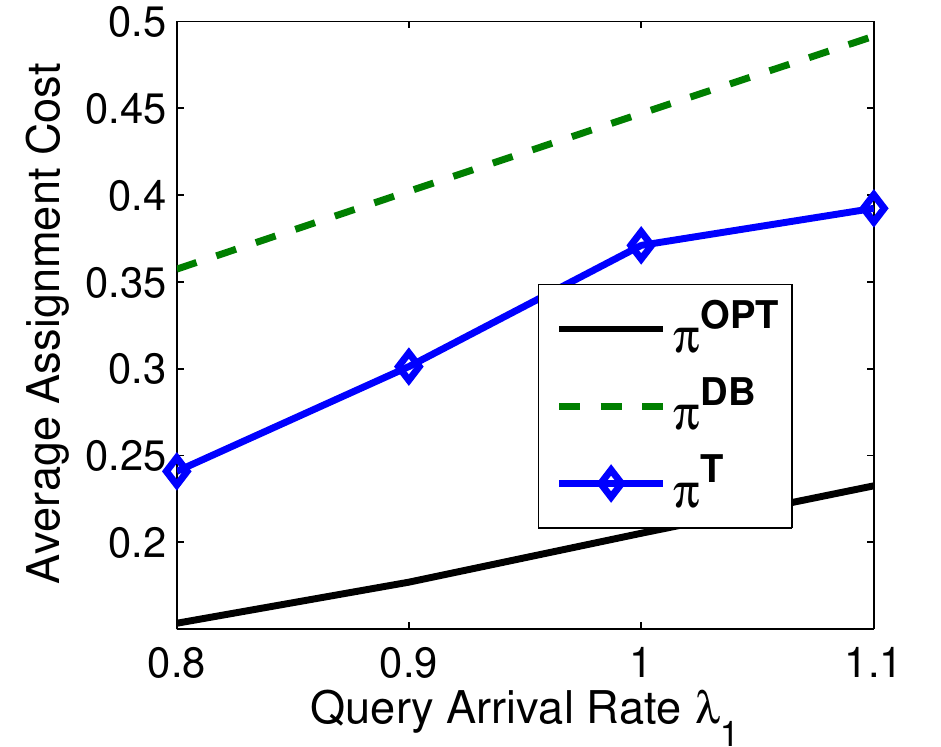}\label{aaa3} 
 }  
\caption{ Average Assignment Cost for different query arrival rates $\lambda_{1}$, $ \lambda_{2}=0.5$, $\mu=1.8$ and $T=1$}
 \end{figure} 

\begin{figure}[ht]
 \centering 
 \subfigure[Average Assignment Cost]
 {  
\includegraphics[scale=0.6]{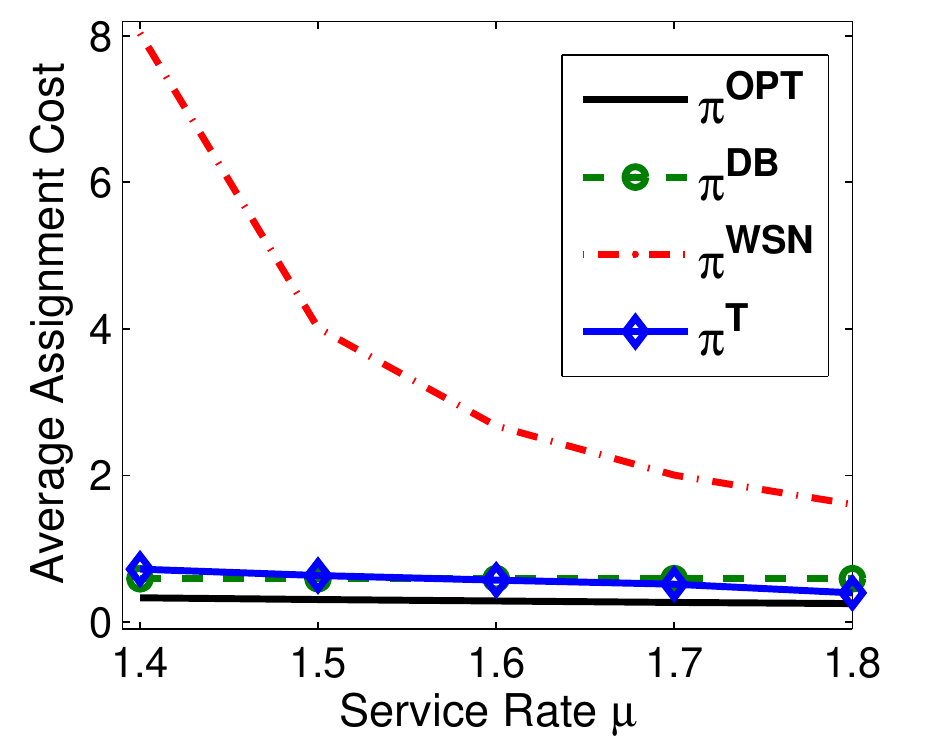}\label{mu1} 
 }  
 \subfigure[Average Assignment Cost - Zoom in]
 {   
\includegraphics[scale=0.6]{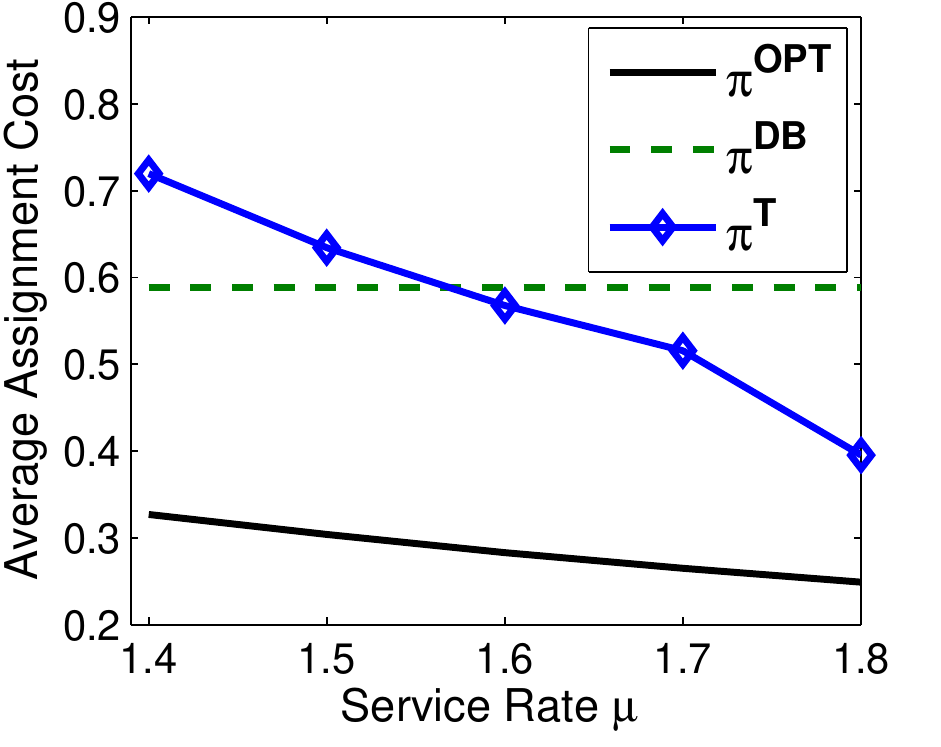}\label{mu2} 
 }  
\caption{ Average Assignment Cost for different processing capabilities $\mu$, $ \lambda_{1}=0.8$, $\lambda_{2}=0.5$ and $T=1$}
 \end{figure} 
Such insight into the performance of the system enables WSN service providers to deliver customized and efficient monitoring services to end-users.
For reasonably large data validity tolerances, simple heuristics such as $\pi^{Db}$ or $\pi^{T}$ perform well in comparison to the optimal policy. These heuristics are particularly suitable for monitoring environments with little variation over time, e.g. temperature sensing.
However, for applications with highly constrained delivery requirements and large data variance over time, such as fire detection or $CO_{2}$ monitoring, the validity tolerance $T$ is expected to be low. In this case, our proposed model outperforms the heuristics.
Moreover, as seen in Figure \ref{optimal}, the optimal policy assigns incoming queries to the WSN only if the number of reports in service exceeds the number of queries. A large number of reports in service ensures frequent DB updates which, in turn, decreases the assignment costs.

\subsection{Policy Simulations for Real Query Traffic} \label{Results with real query arrivals}

In this subsection, we assess the performance of the  above described policies using data obtained from a commercial sensor network platform \cite{Munisense}.
We use a logfile containing timestamps (in seconds) of the queries arriving at the platform. We selected two time periods, shown in Figures \ref{fig:dataset1} and \ref{fig:dataset2}, which are representative of the intensity of query arrivals.

\textbf{Dataset 1} contains timestamps of queries from one weekday around lunchtime, when the platform typically receives many queries (Figure \ref{fig:dataset1}). The number of queries arriving at the platform per minute varies from about 25 in a busy period, to 5 or fewer during a quiet period. The coexistence of such busy periods and quiet periods makes the assumption of query arrivals originating from a homogeneous Poisson process invalid. Hence, verifying the performance of our optimal policy on such non-Poisson data provides valuable insight into its practical relevance.

\textbf{Dataset 2} has timestamps of queries arriving at the platform during night-time (Figure \ref{fig:dataset2}). In this timespan, most queries are generated by a periodically refreshing dashboard and show little variance.
\begin{figure}[ht]
	\centering 
	\subfigure[Dataset 1]{  
		\includegraphics[scale=0.4]{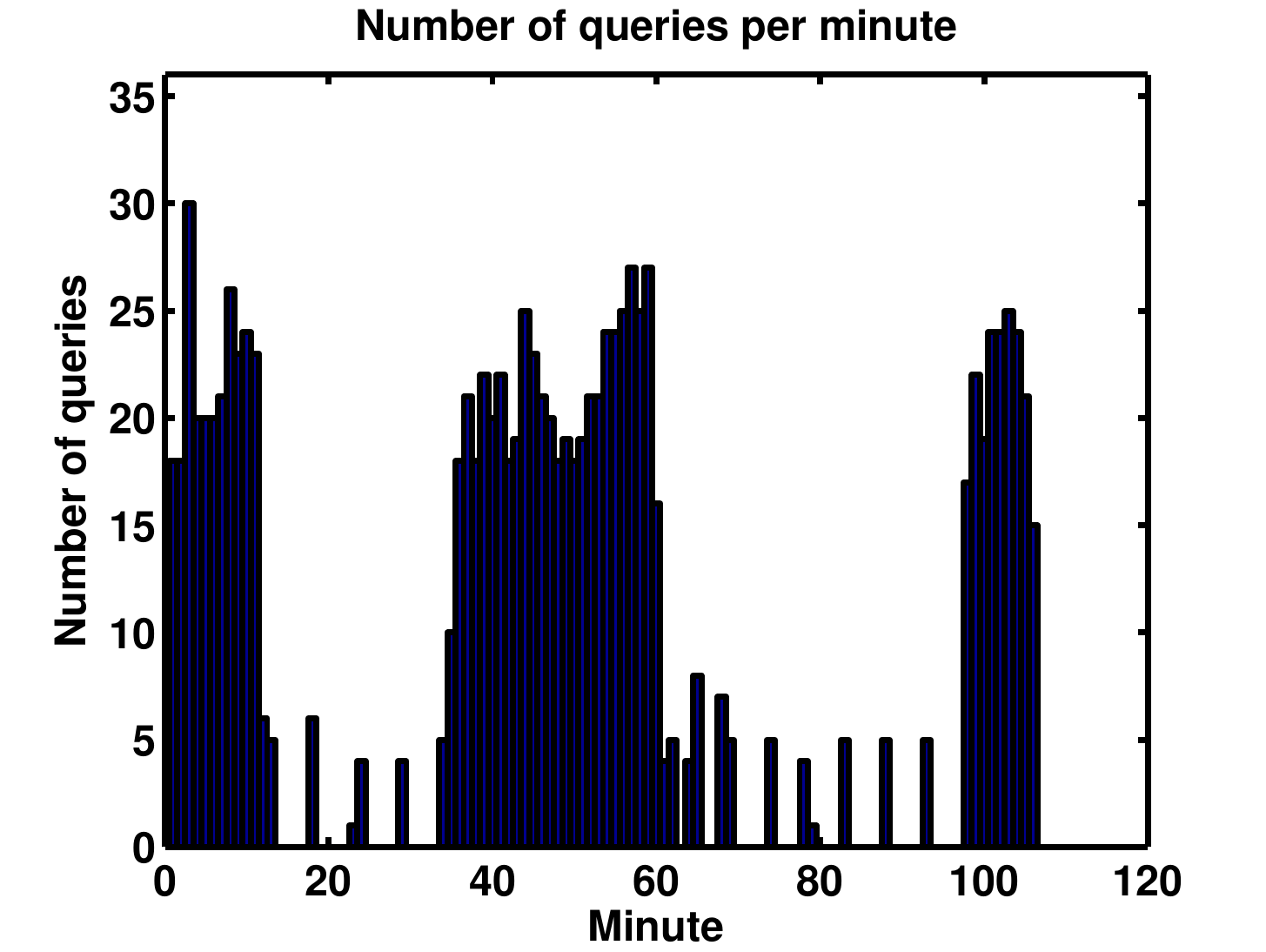}\label{fig:dataset1} 
	}  
	\subfigure[Dataset 2]{  
		\includegraphics[scale=0.4]{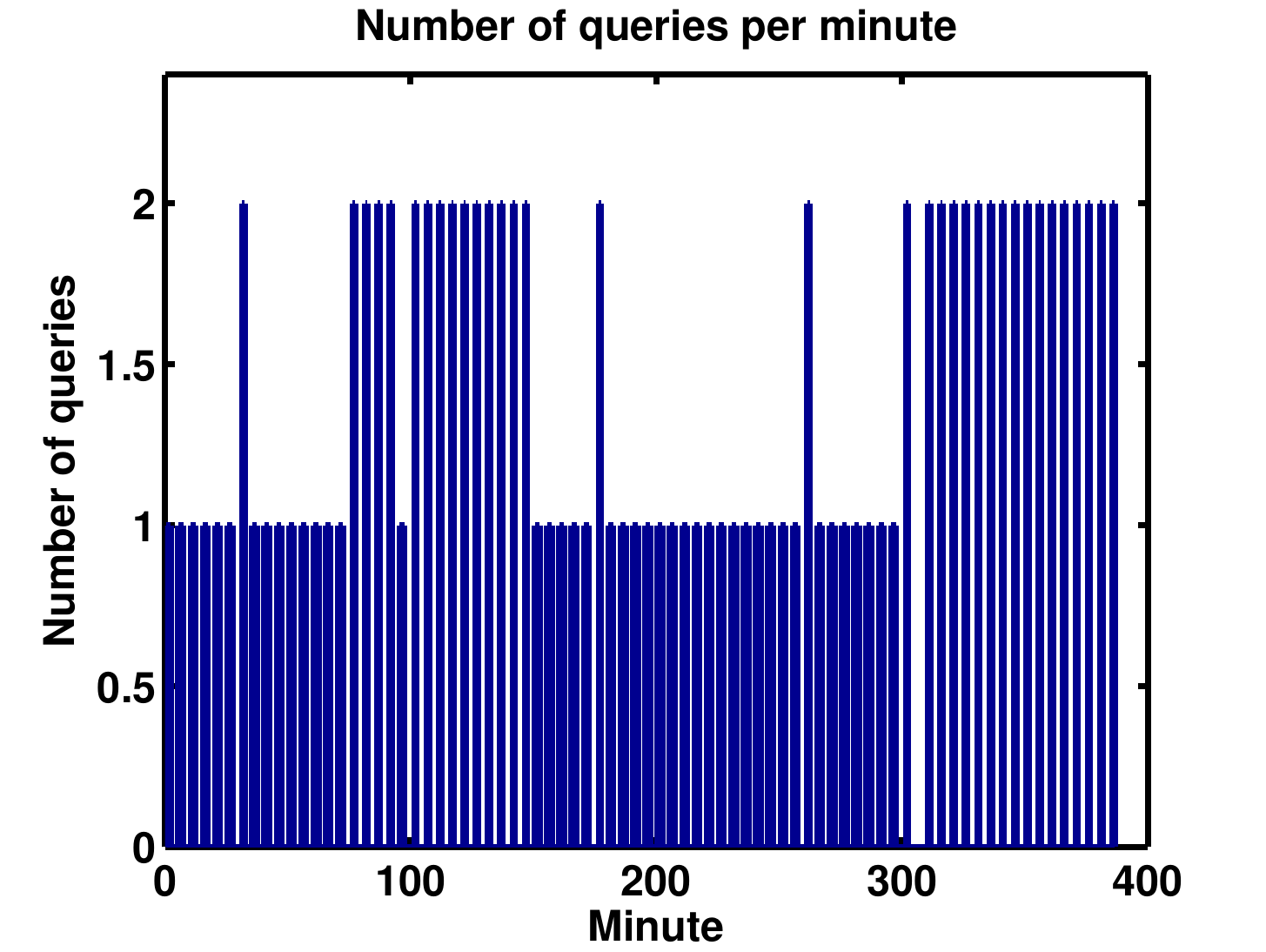}\label{fig:dataset2} 
	}
	\caption{The number of arriving queries per minute for the two datasets.}
\end{figure}  

We perform a discrete event simulation and use the timestamps from the datasets as the arrival times of the queries. The optimal policy is determined using the procedure outlined in Section \ref{Model Formulation}. The query arrival rate, $\lambda_1$,  is estimated from the mean inter-arrival time of the queries in the datasets. We choose the report arrival rate $\lambda_2$ and the service rate $\mu$ such that the system has the same load as the one in Figure \ref{aaa}.
\begin{figure}[ht]
	\centering 
	\subfigure[Average Assignment costs]{  
		\includegraphics[scale=0.6]{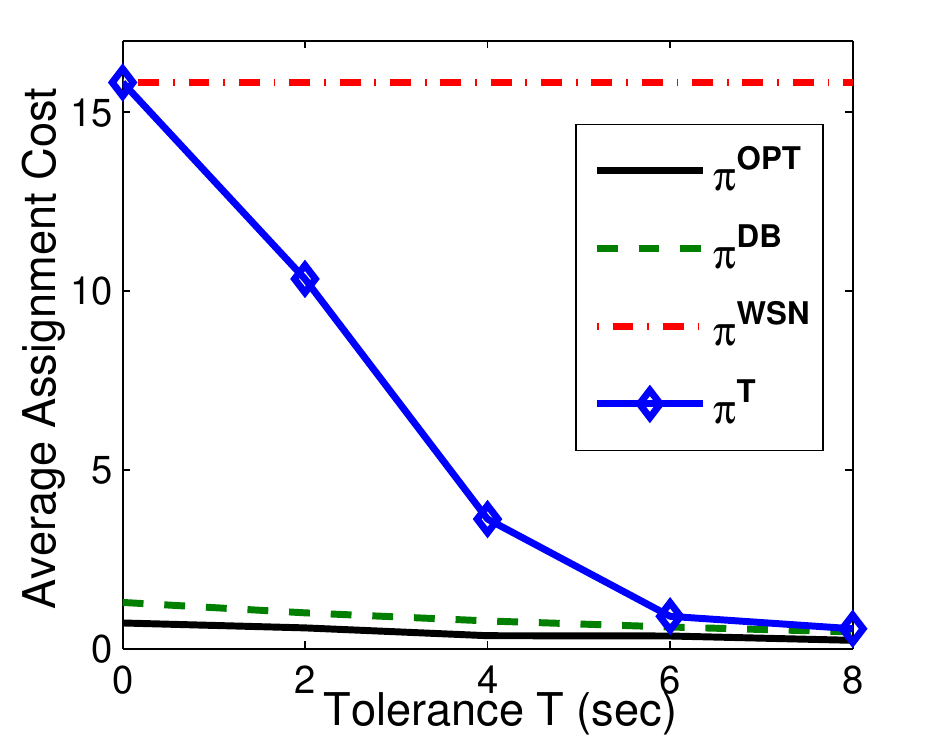}\label{fig:dataset1-costs} 
	}  
	\subfigure[DB utilization]{  
		\includegraphics[scale=0.6]{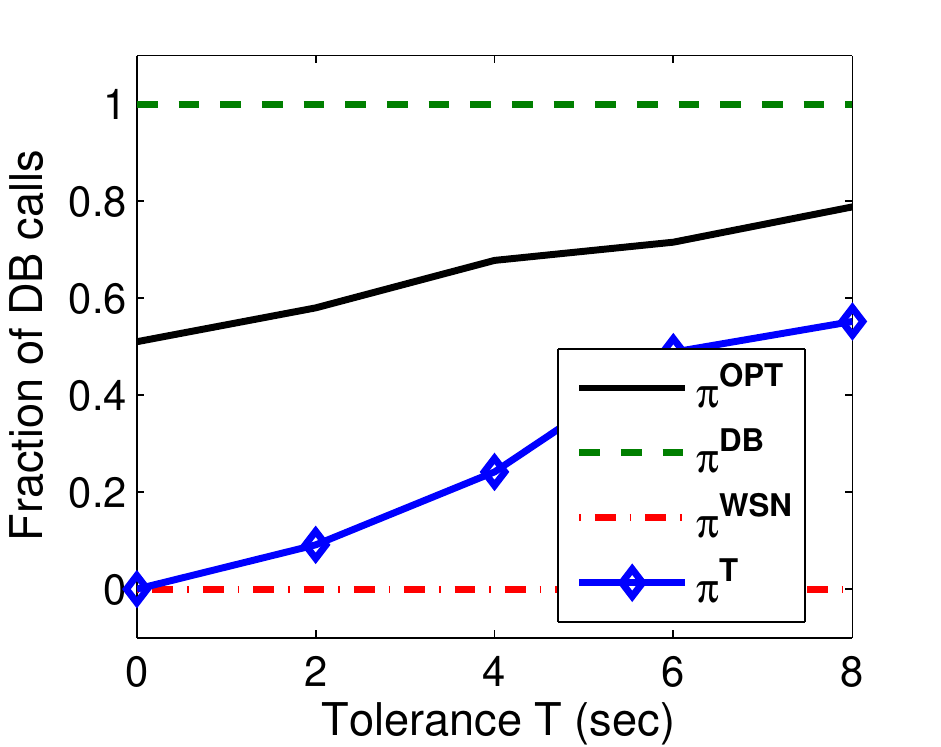}\label{fig:dataset1-dbutil} 
	}  
	\caption{Average Assignment costs and DB utilization for Dataset 1}
\end{figure} 

For Dataset 1, Figures \ref{fig:dataset1-costs} and \ref{fig:dataset1-dbutil} show that our optimal policy outperforms the heuristics in terms of the average assignment costs. The load of the WSN is considerably decreased by routing the queries to the DB. The difference in performance is especially visible for smaller time tolerances, where the optimal policy achieves lower average costs while making more use of the DB.
Results are similar for Dataset 2 (Figures \ref{fig:dataset2-costs} and \ref{fig:dataset2-dbutil}). 
\begin{figure}[ht]
	\centering 
	\subfigure[Average Assignment costs]{  
		\includegraphics[scale=0.6]{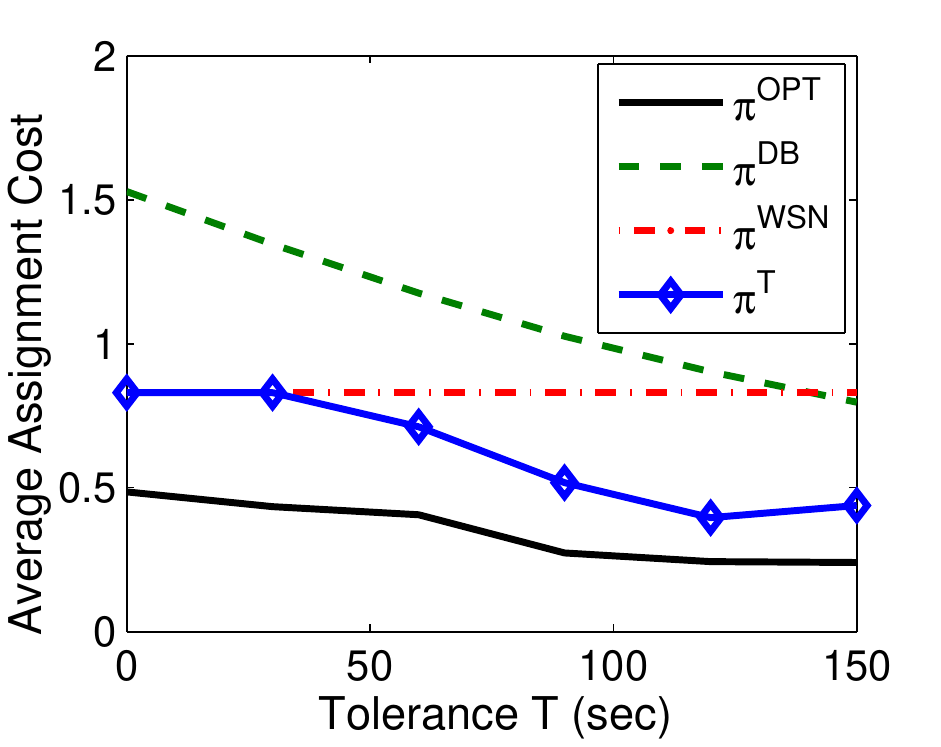}\label{fig:dataset2-costs} 
	}  
	\subfigure[DB utilization]{  
		\includegraphics[scale=0.6]{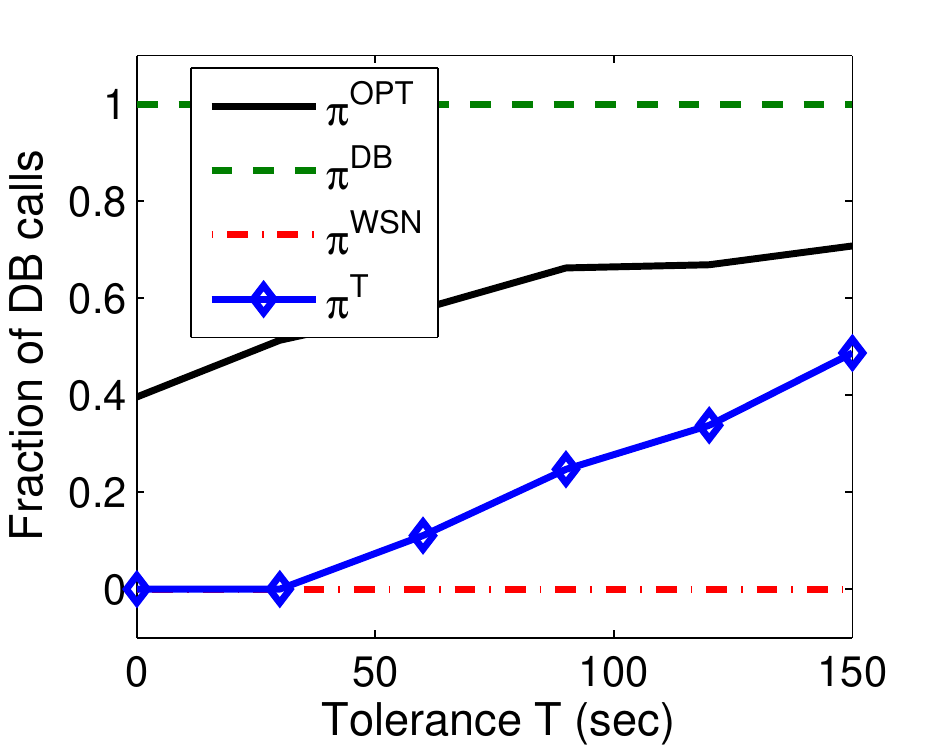}\label{fig:dataset2-dbutil} 
	}  
	\caption{Average Assignment costs and DB utilization for Dataset 2}
\end{figure} 
Simulation results show that the optimal policy achieves cost savings that are independent of the assumption that the arrivals at the platform follow a Poisson process. 

These simulation results emphasize the practical applicability of our proposed model. Lastly, we point out that the proposed  model is independent of the wireless sensor platform used and its applications.

\section {Conclusion and Future work}\label{Conclusion and Future work}

This paper investigated the trade-off between the query waiting  time and the validity of the stored data.
Firstly, we defined the query assignment problem as a Continuous Time Markov Decision Process with a drift. We next defined, for computational purposes, a stochastically equivalent,  uniformized Markov Decision Problem. We provided an optimal query assignment strategy and assessed numerically its performance. We showed that for low validity tolerance, the proposed policy achieves significant cost saving in comparison to several feasible heuristics, commonly used in practice. Lastly, we showed that the proposed assignment policy outperforms the heuristics also in the case of real-life query traffic.

\section*{Acknowledgements:} This work was performed within the project RRR (Realisation of Reliable and Secure Residential Sensor Platforms) of the Dutch program \emph{IOP Generieke Communicatie}, number IGC1020, supported by the \emph{Subsidieregeling Sterktes in Innovatie}.


 \appendix

\section{ Proof of Theorem \ref{lemma}}
\label{proofLemma}

\begin{proof}
Uniformization is commonly used for  Markov jump processes, making the problem computationally tractable. As a drift component is introduced in the present setting (the age component of our process evolves continuously in time), this is no longer standard.

The infinitesimal generators uniquely define a Markov process. Therefore, it is sufficient to show that the infinitesimal generators of the exponential uniformized Markov Decision Process and the original Continuous Time Markov Decision Process with a drift are identical.

To prove this, let $P^{d}_{\Delta t}$ denote the transition probability measures over time interval of length $\Delta t$, given that at the last jump the system is in state $(i,j,t)$ and that following a next jump, decision $d$ is taken. We implicitly assume that a policy $\pi$, prescribing an action $d$ upon a query arrival when the system is in state $(i,j,t)$, is left continuous.

Let $f:\mathbb{N} \times \mathbb{N} \times \mathbb{R}$ be an arbitrary real valued function, differentiable in $t$ and $o (\Delta t)^{2} \leq C o(\Delta t)^{2}$ for any constant $C$.
Then by conditioning upon the exponential jump epoch with variable $B$ and for arbitrary $f$ we obtain,
\begin{align*}
P^{d}_{\Delta t} f(i,j,t) = &\: e^{-\Delta t \cdot B}  f(i,j,t+ \Delta t)+
				 \int_{0}^{\Delta t} B e^{-sB} \sum \limits_{(i,j,t)'} P^{d}[(i,j,t),(i',j',t+s) ] f (i',j', t+s) ds+o(\Delta t)^{2} \\
				&=f(i,j,t+\Delta t)-
				\Delta t B f(i,j,t+\Delta t)+
				\Delta t B \sum \limits_{(i',j') \neq(i,j)} q^{d}[(i,j,t),(i',j',t)]f(i',j', t+ \Delta t) B^{-1}\\
				&+ \Delta t B [1-q^{d}(i,j) B^{-1}]f(i,j,t+\Delta t) + o(\Delta t)^{2}\\
				&=f(i,j, t+\Delta t) +B \sum \limits_{(i',j') \neq(i,j)}q^{d}[(i,j,t),(i',j',t)] [f(i',j', t+ \Delta t)- f(i,j,t+\Delta t)] +  o(\Delta t)^{2}
\end{align*}
where 
$q^{d}[(i,j,t), (i',j',t)]=q^{d}[(i,j,t+s), (i',j',t+s)]$ for any '$(i',j) \neq (i,j)$ and arbitrary $s$.
The term $o(\Delta t)^{2}$ reflects the probability of at least two jumps and the second term of the Taylor expansion for $e^{-\Delta B}$.

Hence, by subtracting $f(i,j,t)$, dividing by $\Delta t$ and letting $\Delta t \rightarrow 0$, we obtain,
\begin{align*}
\frac{P^{d}_{\Delta t} f(i,j,t)  - f(i,j,t)}{ \Delta t}= &\:[ f(i,j,t+ \Delta t) - f(i,j,t)]/ \Delta t\\
								& + B [ f(i,j,t+ \Delta t) - f(i,j,t)] + o(\Delta t)^{2}\\
								&+ \sum \limits_{(i',j') \neq (i,j)} q^{d}[(i,j,t),(i',j',t)] [f(i',j', t)-f(i,j,t)]\\
								&\rightarrow \frac{d}{dt} f(i,j,t)+ \sum  \limits_{(i',j') \neq (i,j)} q^{d}[(i,j,t),(i',j',t)][f(i',j', t)-f(i,j,t)]\\
								&=\textbf{A}^{d} f(i,j,t) \:\:\: \textrm{which is the generator in (\ref{Agenerator})}.
\end{align*}
Since the exponentially uniformized Markov decision process (as defined in section \ref{Exponential Markov Decision Process}) and the continuous time Markov decision process with a drift (defined in section \ref{Continuous Time Markov Decision Process}) share the same generators (see \cite{dynkin1965markov}), the two processes are stochastically equivalent.
\end{proof}

\section{ Proof of Theorem \ref{lemma2}}
\label{CostWSN}

\begin{proof}
We first analyze the expected assignment cost under the policy $\pi^{W}$.

The $\pi^{W}$ policy is independent of the validity tolerance. The WSN behaves as a regular M/M/1 Processor Sharing queue. Therefore, the cost of the heuristic is given by the expected number of jobs in the WSN as follows,
\begin{align}
C_{\pi^{W}}&=\mathbb{E} (i) \\
		&=\frac{\lambda_{1}}{\lambda_{1}+\lambda_{2}} \cdot \mathbb{E}(i+j) \notag\\
		&=\frac{\lambda_{1}}{\lambda_{1}+\lambda_{2}} \cdot \frac{\lambda_{1}+\lambda_{2}}{\mu-(\lambda_{1}+\lambda_{2})}  \notag \\
		&=\frac{\lambda_{1}}{\mu-(\lambda_{1}+\lambda_{2})} \notag
\end{align}

We now analyze the expected assignment cost under the policy $\pi^{Db}$.

We define the cost of the policy $\pi^{Db}$ in terms of the limiting probabilities as follows,
\begin{align}\label{cost}
C_{\pi^{Db}}&=\lambda_{1} \sum\limits_{N \geq T} \pi_{N}(N) \cdot (N-T)^{+}, 
\end{align}
where $\pi_{N}(N)=\sum\limits_{j} \pi(j,N)$ is the long-run proportion of time that the process is in state $N$.

We have the following balance equations for component $j$, 

\begin{equation}\label{BalanceEqJ}
\begin{cases}
\pi_{j}(0)=\mu \pi_{j}(1)+ (1-\lambda_{2}) \pi_{j}(0)\\
\pi_{j}(1)=\mu\pi_{j}(2) +(1-\lambda_{2}-\mu)\pi_{j}(1) +\lambda_{2}\pi_{j}(0)\\
\pi_{j}(N-1)=\mu\pi_{j}(N) +(1-\lambda_{2}-\mu)\pi_{j}(N-1) +\lambda_{2}\pi_{j}(N-2)\\
\sum\limits_{k}\pi_{j}(k)=1
\end{cases}
\end{equation} where $\pi_{j}(0)=\sum\limits_{N} \pi(0,N)$.

Solving (\ref{BalanceEqJ}), we obtain: 

\begin{equation}\label{p0number}
\pi_{j}(0)=1-\frac{\lambda_{2}}{\mu}
\end{equation}

Notice that 
\begin{align}\label{eqPiRecurrent}
\pi(0,N)&=(1-\lambda_{2}-\mu)\pi(0,N-1)+\mu\pi(0,N-1)\\
		&=(1-\lambda_{2})^{N}\pi(0,0) \notag
\end{align}

Now
\begin{align}\label{pi0=pi00}
\pi_{j}(0)&=\sum\limits_{N} \pi(0,N) \notag \\
		&=\sum\limits_{N} (1-\lambda_{2})^{N}\pi(0,0) \notag \\
		&=\frac{1}{\lambda_{2}} \pi(0,0)
\end{align}

From (\ref{pi0=pi00}) and (\ref{p0number}), we obtain:
\begin{equation}\label{pi00}
\pi(0,0)=\frac{(\mu-\lambda_{2})\lambda_{2}}{\mu}
\end{equation}

We have the following balance equations for component $N$, 

\begin{equation}\label{BalanceEqN}
\begin{cases}
\pi_{N}(0)=(1-\mu)\pi_{N}(N-1) + \mu\pi_ {N}(0,N-1)\\
\pi_{N}(0)=\mu \sum \limits_{N}\pi(N) - \mu \sum \limits_{N} \pi(0,N), \textrm{ with }\pi(N)=\sum\limits_{j} \pi(j,N)\\
\sum\limits_{k}\pi_{N}(k)=1
\end{cases}
\end{equation}  where $\pi_{N}(N)=\sum\limits_{j} \pi(j,N)$.

But $ \sum \limits_{N}\pi(N)=1$ and $\sum \limits_{N} \pi(0,N)=\pi_{j}(0)=\frac{1}{\lambda_{2}} \pi(0,0)$ as per (\ref{pi0=pi00}).

Now (\ref{BalanceEqN}) becomes,

\begin{equation}\label{BalanceEqN2}
\begin{cases}
\pi_{N}(0)=(1-\mu)\pi_{N}(N-1) + \mu\pi_ {N}(0,N-1)\\
\pi_{N}(0)=\mu [1- \frac{1}{\lambda_{2}} \pi(0,0)]\\
\sum\limits_{k}\pi_{N}(k)=1
\end{cases}
\end{equation}

Solving for (\ref{BalanceEqN2}), we have that 

\begin{equation}\label{piN}
\pi_{N}(N)=\lambda_{2}(1-\lambda_{2})^{N}
\end{equation}

Using (\ref{piN}), we can now compute the cost (\ref{cost}) as follows,
\begin{align*}\label{cost}
C_{\pi^{Db}}&=\lambda_{1} \sum\limits_{N \geq T} \pi_{N}(N) \cdot (N-T)^{+} \\
		&=\lambda_{1} \sum\limits_{N \geq T} \lambda_{2}(1-\lambda_{2})^{N}  \cdot (N-T)^{+}\\
		&=\lambda_{1} \sum\limits_{N' \geq 0} \lambda_{2}(1-\lambda_{2})^{N'+T} \cdot N'\\
		&=\lambda_{1} \lambda_{2} (1-\lambda_{2})^{T} \sum\limits_{N' \geq 0}(1-\lambda_{2})^{N'} \cdot N'\\
		&=\lambda_{1} \lambda_{2} (1-\lambda_{2})^{T+1} \sum\limits_{N' \geq 0}(1-\lambda_{2})^{N'-1} \cdot N'\\
		&=\lambda_{1} \lambda_{2} (1-\lambda_{2})^{T+1} (-\frac{1}{\lambda_{2}})'\\
		&=\frac{\lambda_{1}}{\lambda_{2}} (1-\lambda_{2})^{T+1}
\end{align*}
\end{proof}

\pagebreak
\section{ Optimal Policy under different values of the uniformization parameter}
\label{B}

The structure of the optimal policy for various values of the uniformization parameter $B$ remains the same (see Figure \ref{VariousB}). The threshold tolerance is set to $T=1$.
\begin{figure}[h]
  \begin{center}
    \subfigure[$B=\lambda_1+\lambda_2+\mu, N=30$]{\label{fig:edge-a}\includegraphics[scale=0.55]{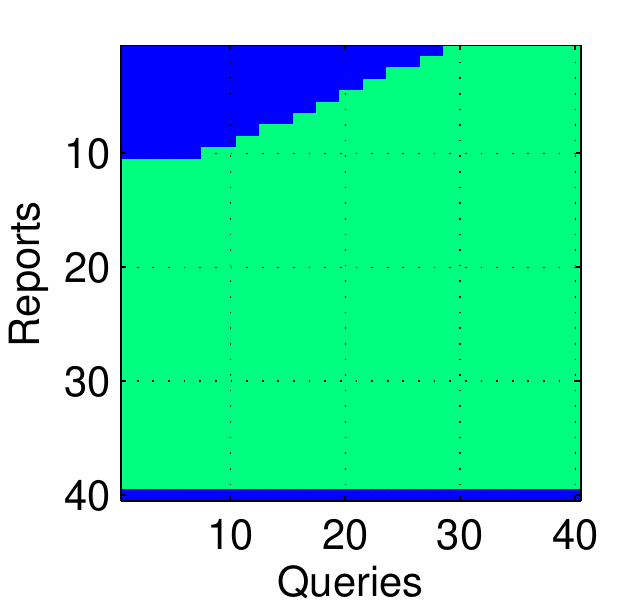}} \:\:\:\:\:\:\:\:\:\: \:\:\:\:\:\:\:\:\:\:
    \subfigure[$B=2(\lambda_1+\lambda_2+\mu), N=60$]{\label{fig:edge-b}\includegraphics[scale=0.55]{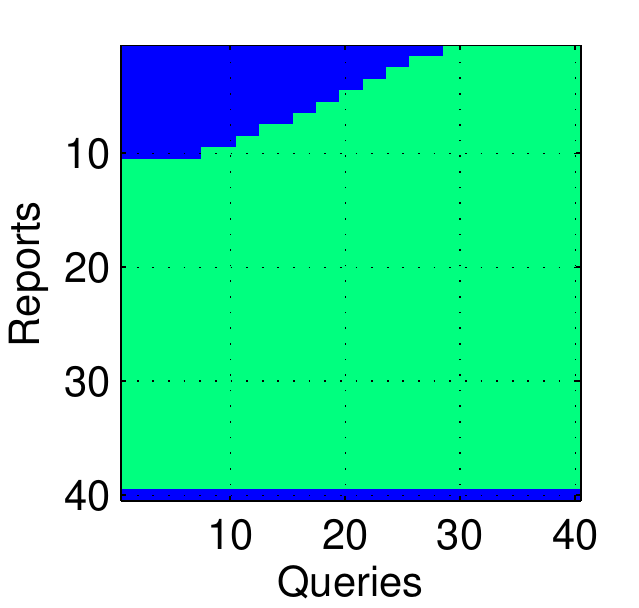}}  \\
    \subfigure[$B=5(\lambda_1+\lambda_2+\mu), N=150$]{\includegraphics[scale=0.55]{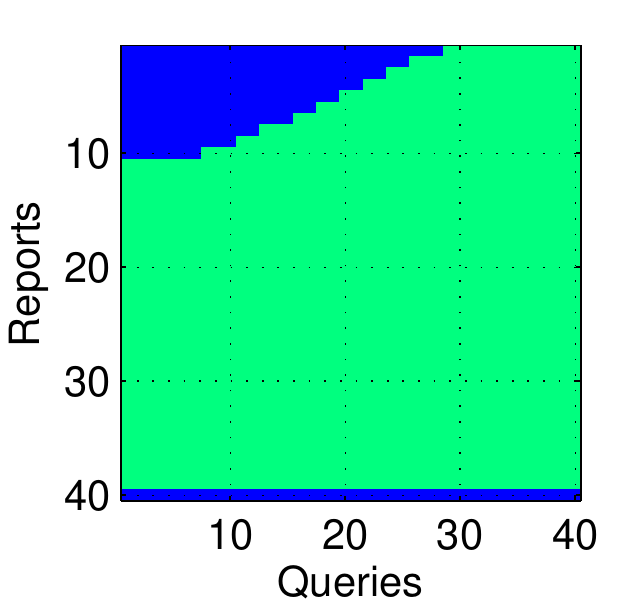}} \:\:\:\:\:\:\:\:\:\: \:\:\:\:\:\:\:\:\:\:
    \subfigure[$B=10(\lambda_1+\lambda_2+\mu), N=300$]{\label{fig:edge-c}\includegraphics[scale=0.55]{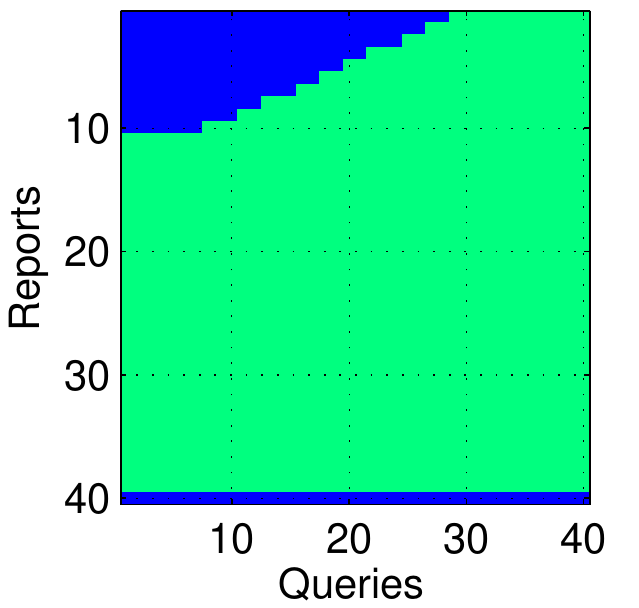}} 
  \end{center}
  \caption{Various uniformization parameter $B$. WSN assignment (blue) and DB assignment (green).}
  \label{VariousB}
\end{figure}



\bibliographystyle{plain}
\bibliography{bibtrue}







\end{document}